\def\p{\partial}
\def\R{\mathbb{R}}
\def\vv<#1>{\langle#1\rangle}
\def\ol{\overline}
\def\Div{{\rm div}}
\def\Span{{\rm span}}
\def\XXint#1#2{\setbox0=\hbox{$#1{#2}{\int}$}{#2}\kern-.5\wd0 }
\def\XXint#1#2#3{{\setbox0=\hbox{$#1{#2#3}{\int}$}
     \vcenter{\hbox{$#2#3$}}\kern-.5\wd0}}
\def\vv<#1>{{\left\langle#1\right\rangle}}
\def\Vol{\mbox{Vol}}
\def\Int{{\rm Int}}
\newtheorem{thm}{Theorem}[section]
\newtheorem{cor}{Corollary}[section]
\theoremstyle{definition}
\theoremstyle{remark}
\numberwithin{equation}{section}
\begin{document}
\title{Rigidity of a trace estimate for Steklov eigenvalues}

\author{Yongjie Shi$^1$}
\address{Department of Mathematics, Shantou University, Shantou, Guangdong, 515063, China}
\email{yjshi@stu.edu.cn}
\author{Chengjie Yu$^2$}
\address{Department of Mathematics, Shantou University, Shantou, Guangdong, 515063, China}
\email{cjyu@stu.edu.cn}
\thanks{$^1$Research partially supported by NSF of China with contract no. 11701355. }
\thanks{$^2$Research partially supported by NSF of China with contract no. 11571215.}
\renewcommand{\subjclassname}{%
  \textup{2010} Mathematics Subject Classification}
\subjclass[2010]{Primary 35P15; Secondary 58J32}
\date{}
\keywords{Steklov eigenvalue, development, submersion}
\begin{abstract}
In this short note, we show the rigidity of a trace estimate for Steklov eigenvalues with respect to functions in our previous work (Trace and inverse trace of Steklov eigenvalues. J. Differential Equations 261 (2016), no. 3, 2026--2040.). Namely, we show that equality of the estimate holds  if and only if the manifold is a direct product of a round ball and a closed manifold. The key ingredient in the proof is a decomposition theorem for flat and totally geodesic Riemannian submersions which may be of independent interests.
\end{abstract}
\maketitle\markboth{Shi \& Yu}{Rigidity of a trace estimate}
\section{Introduction}
Let $(M^n,g)$ be a compact  Riemannian manifold with nonempty boundary. If the following boundary value problem:
\begin{equation}
\left\{\begin{array}{l}\Delta u=0\\
\frac{\p u}{\p\nu}=\sigma u
\end{array}\right.
\end{equation}
has a nontrivial solution, then we call the constant $\sigma$ a Steklov eigenvalue of $(M,g)$. Here $\nu$ is the unit outward normal vector field on $\p M$. The Rayleigh quotient corresponding to Steklov eigenvalues is 
\begin{equation}
Q(u)=\frac{\int_M\|\nabla u\|^2dV_M}{\int_{\p M}u^2dV_{\p M}}.
\end{equation}

Steklov \cite{St,Ku} considered this kind of eigenvalue problems because it is closely related to the frequency of liquid sloshing in a container. It is not hard to see that Steklov eigenvalues are just eigenvalues of the Dirichlet-to-Neumann map that sends Dirichlet boundary data of a harmonic function on $M$ to its Neumann boundary data. Steklov eigenvalues were extensively study in the past decades, because it is deeply related to free boundary minimal submanifolds and conformal geometry in differential geometry (\cite{FS1,FS2}), liquid sloshing in  physics and Calder\'on inverse problem (\cite{Ca,Ul}) in applied mathematics.

Higher order Dirichlet-to-Neumann maps were also considered in literature \cite{BS,JL}, because they are closely related to inverse problems for the Maxwell equation in electromagnetics. However, the Dirichlet-to-Neumann maps considered in \cite{BS,JL} was not suitable for spectral analysis. In 2012, Raulot and Savo \cite{RS2} introduced a new notion of higher order Dirichlet-to-Neumann maps which is suitable for spectral analysis.

We would also like to mention that discrete versions of the classical Dirichlet-to-Neumann maps and higher order Dirichlet-to-Neuman maps were introduced in \cite{HHW} and \cite{SY2} respectively.

The Steklov eigenvalues of a Riemannian manifold $(M,g)$ can be listed in ascending order counting multiplicity as follows:
\begin{equation*}
0=\sigma_0<\sigma_1\leq\sigma_2\leq\cdots\leq\sigma_k\leq\cdots.
\end{equation*}
In \cite{SY1}, by further extending the idea of Raulot-Savo in \cite{RS1,RS2}, among the others, we obtained the following trace estimate for Steklov eigenvalues:
\begin{thm}\label{thm-trace}
Let $(M^n,g)$ be a compact Riemannian manifold with nonempty boundary and $V$ be the space of parallel exact 1-forms on $M$. Suppose that $\dim V=m>0$. Then
\begin{equation}\label{eq-trace-0}
\sigma_1+\sigma_2+\cdots+\sigma_m\leq \frac{\Vol(\p M)}{\Vol (M)}.
\end{equation}
\end{thm}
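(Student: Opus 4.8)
The plan is to exhibit $m$ explicit test functions and feed them into the Rayleigh quotient $Q$.The plan is to feed the parallel exact $1$-forms into the Rayleigh--Ritz (Ky Fan) characterization of the Steklov spectrum and then extract the volume ratio through a matrix Cauchy--Schwarz step. First I would choose an orthonormal basis $\omega_1,\dots,\omega_m$ of $V$; since parallel forms have constant pointwise inner products this is unambiguous, and writing $\omega_i=df_i$ produces functions $f_i$ whose gradients $\nabla f_i$ form a parallel orthonormal frame, i.e. $\langle\nabla f_i,\nabla f_j\rangle=\delta_{ij}$ pointwise. Because $\nabla df_i=0$ the Hessian of $f_i$ vanishes, so $\Delta f_i=\tr(\nabla df_i)=0$ and each $f_i$ is harmonic; this lets me use Green's identity freely. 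After subtracting constants I may assume $\int_{\p M}f_i\,dV_{\p M}=0$, so the boundary traces are orthogonal to the $\sigma_0$-eigenspace (the constants); a maximum-principle argument (a harmonic function with constant boundary values is constant, forcing its gradient to vanish) shows the traces stay linearly independent, hence the Gram matrix $B_{ij}=\int_{\p M}f_if_j\,dV_{\p M}$ is positive definite.

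Next I would orthonormalize the boundary traces via $B^{-1/2}$, obtaining functions admissible as test functions for $\sigma_1,\dots,\sigma_m$. Since the $f_i$ are already harmonic, the Rayleigh--Ritz principle for a sum of eigenvalues gives $\sum_{i=1}^m\sigma_i\le \tr(CB^{-1})$, where $C_{ij}=\int_M\langle\nabla f_i,\nabla f_j\rangle\,dV_M$. Green's identity, combined with $\langle\nabla f_i,\nabla f_j\rangle=\delta_{ij}$, yields $C=\Vol(M)\,I$, and therefore $\sum_{i=1}^m\sigma_i\le \Vol(M)\,\tr(B^{-1})$.

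The main obstacle is that $\Vol(M)\,\tr(B^{-1})$ is not visibly controlled by $\Vol(\p M)/\Vol(M)$, and a naive estimate fails; the resolution is a matrix Cauchy--Schwarz (Schur complement) argument. I would consider the $2m$ boundary functions $f_1,\dots,f_m,\frac{\p f_1}{\p\nu},\dots,\frac{\p f_m}{\p\nu}$, whose $L^2(\p M)$ Gram matrix is the symmetric block matrix $\begin{pmatrix}B & C\\ C & N\end{pmatrix}\succeq 0$, where $N_{ij}=\int_{\p M}\frac{\p f_i}{\p\nu}\frac{\p f_j}{\p\nu}\,dV_{\p M}$ and the off-diagonal block equals $C$ by Green. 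Positive semidefiniteness and the Schur complement relative to $B\succ0$ give $N\succeq \Vol(M)^2 B^{-1}$, hence $\tr(N)\ge \Vol(M)^2\,\tr(B^{-1})$. Finally, because $\{\nabla f_i\}$ is orthonormal at each point, Bessel's inequality yields $\sum_i\big(\tfrac{\p f_i}{\p\nu}\big)^2=\sum_i\langle\nabla f_i,\nu\rangle^2\le 1$ pointwise on $\p M$, so $\tr(N)\le \Vol(\p M)$. Chaining the three inequalities gives $\sum_{i=1}^m\sigma_i\le \Vol(M)\,\tr(B^{-1})\le \tr(N)/\Vol(M)\le \Vol(\p M)/\Vol(M)$, as claimed. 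I expect the Schur-complement identification to be the conceptual heart of the argument, and I would track its equality case carefully (equality forces the $\nabla f_i$ to span the normal direction along $\p M$ and $B$ to be a multiple of the identity), since that is precisely the information the subsequent rigidity analysis must exploit.
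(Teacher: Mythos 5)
Your argument is correct, and it shares the paper's two essential ingredients: the harmonic potentials $f_1,\dots,f_m$ of a pointwise-orthonormal basis of $V$ (normalized to have mean-zero boundary traces) as test functions, and the pointwise Bessel inequality $\sum_{i=1}^m\vv<\nu,\nabla f_i>^2\leq\|\nu\|^2=1$ on $\p M$, which is the only place $\Vol(\p M)$ enters. The middle of your proof is organized differently, though. The paper never forms the boundary Gram matrix $B$: it uses the scalar Cauchy--Schwarz inequality $\left(\int_{\p M}f\frac{\p f}{\p\nu}\,dV_{\p M}\right)^2\leq\int_{\p M}f^2\,dV_{\p M}\int_{\p M}\left(\frac{\p f}{\p\nu}\right)^2dV_{\p M}$ to dominate the Steklov Rayleigh quotient of any harmonic $f$ by the auxiliary quotient $\int_{\p M}(\p f/\p\nu)^2dV_{\p M}\big/\int_M\|\nabla f\|^2dV_M$, applies Courant min--max to this auxiliary quotient on the $m$-dimensional space $U$ to obtain the individual bounds $\sigma_k\leq\lambda_k$, and then sums. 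Your Ky Fan plus Schur-complement step is the matrix form of that same Cauchy--Schwarz inequality: positive semidefiniteness of the block Gram matrix of $\{f_i,\p f_i/\p\nu\}$ encodes the scalar inequality for every function in $U$ simultaneously, so the two proofs are equivalent in content. What the paper's packaging buys is the stronger term-by-term comparison $\sigma_k\leq\lambda_k$ and, crucially for the rigidity analysis that follows, the immediate fact that equality forces each $f_i$ to be a Steklov eigenfunction; what yours buys is a single clean chain of trace inequalities with no need to diagonalize an auxiliary quadratic form, at the modest cost of having to verify $B\succ 0$ (your maximum-principle remark handles this correctly). Your reading of the equality case is also consistent with the paper's: equality forces $\nu\in\Span\{\nabla f_1,\dots,\nabla f_m\}$ along $\p M$ and forces the $f_i$ to be eigenfunctions with $\sigma_1=\cdots=\sigma_m$, which is exactly the input fed into the submersion and decomposition argument.
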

As a direct corollary of Theorem \ref{thm-trace}, we have
\begin{equation}\label{eq-trace-eu}
\sigma_1+\sigma_2+\cdots+\sigma_n\leq \frac{\Vol(\p\Omega)}{\Vol(\Omega)}
\end{equation}
for any bounded smooth domain $\Omega$ in $\R^n$. This estimate is sharp because the equality holds when $\Omega$ is a round ball. By using the Cauchy-Schwartz inequality, one has
\begin{equation}
\frac{1}{\sigma_1}+\frac{1}{\sigma_2}+\cdots+\frac{1}{\sigma_n}\geq\frac{n^2\Vol(\Omega)}{\Vol(\p \Omega)}.
\end{equation}
However, this estimate is weaker than Brock's inverse trace estimate \cite{Br}:
\begin{equation}
\frac{1}{\sigma_1}+\frac{1}{\sigma_2}+\cdots+\frac{1}{\sigma_n}\geq \frac{n\Vol^\frac1n(\Omega)}{\Vol^\frac1n(\mathbb{B}^n)}
\end{equation}
because of the isoperimetric inequality for bounded Euclidean domains.

In this paper, we characterize the equality case of \eqref{eq-trace-0}. In summary, combining with Theorem \ref{thm-trace}, we have the following result.
\begin{thm}\label{thm-trace-rigidity}
Let $(M^n,g)$ be a compact Riemannian manifold with nonempty boundary and $V$ be the space of parallel exact 1-forms on $M$. Suppose that $\dim V=m>0$. Then
\begin{equation}\label{eq-trace}
\sigma_1+\sigma_2+\cdots+\sigma_m\leq \frac{\Vol(\p M)}{\Vol (M)}.
\end{equation}
The equality holds if and only if $M$ is a metric product of $\mathbb{B}^m(R)$ and a closed manifold $F$ with $R=\displaystyle\frac{m\Vol(M)}{\Vol(\p M)}$ and 
\begin{equation}\label{eq-sigma-mu}
\sigma(\mu_1(F))\geq\frac{1}{R}
\end{equation}
where $\mu_1(F)$ is the first positive eigenvalue for the Laplacian operator on $F$ and 
\begin{equation}
\sigma(\mu):=\inf_{f\in C^\infty(\mathbb{B}^m(R))}\frac{\int_{\mathbb{B}^m(R)}(\|\nabla f\|^2+\mu f^2)dV_{\mathbb{B}^m(R)}}{\int_{\p{\mathbb{B}^m(R)}}f^2dV_{\p{\mathbb{B}^m(R)}}}
\end{equation}
which is the first eigenvalue of the following boundary value problem:
\begin{equation}
\left\{\begin{array}{ll}\Delta f=\mu f& {\rm on}\  {\mathbb{B}^m(R)}\\
\frac{\p f}{\p \nu}=\sigma f&{\rm on }\ \p{\mathbb{B}^m(R)}.
\end{array}\right.
\end{equation}
\end{thm}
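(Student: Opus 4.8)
The plan is to reopen the proof of Theorem~\ref{thm-trace} and record exactly which inequalities it uses, so that equality can be dissected step by step. Choosing an orthonormal basis $df_1,\dots,df_m$ of $V$, each $\nabla f_i$ is a parallel unit vector field, so $f_i$ is harmonic with $\|\nabla f_i\|\equiv 1$; after adding constants I may assume $\int_{\p M}f_i\,dV_{\p M}=0$, and the boundary traces $f_i|_{\p M}$ are linearly independent (a harmonic function constant on $\p M$ is constant). Writing $G_{ij}=\int_{\p M}f_if_j$ and using $\int_{\p M}f_i\,\p_\nu f_j=\int_M\la\nabla f_i,\nabla f_j\ra=\Vol(M)\delta_{ij}$, the estimate \eqref{eq-trace} factors as
\begin{equation*}
\sigma_1+\cdots+\sigma_m\le \Vol(M)\,\tr(G^{-1})=\frac{1}{\Vol(M)}\sum_i\|\Pi\Lambda f_i\|^2_{L^2(\p M)}\le\frac{1}{\Vol(M)}\sum_i\|\Lambda f_i\|^2_{L^2(\p M)}\le\frac{\Vol(\p M)}{\Vol(M)},
\end{equation*}
where $\Lambda$ is the Dirichlet--to--Neumann map, $\Pi$ is the $L^2(\p M)$-projection onto $W=\Span\{f_i|_{\p M}\}$, the first step is Ky Fan's trace min--max, the middle equality uses $\Pi\Lambda f_i=\Vol(M)\sum_j(G^{-1})_{ij}f_j$, the third is the contraction property of $\Pi$, and the last uses $\sum_i(\p_\nu f_i)^2=|\nu^{\mathcal H}|^2\le 1$ pointwise, where $\mathcal H=\Span\{\nabla f_i\}$ is the parallel horizontal distribution.

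Equality in \eqref{eq-trace} forces equality at every step. From Ky Fan's theorem, $W$ must be spanned by Steklov eigenfunctions realizing $\sigma_1,\dots,\sigma_m$, so after rotating the basis of $V$ I may take $\p_\nu f_i=\sigma_{(i)}f_i$ on $\p M$; equality in the contraction step gives $\Lambda f_i=\p_\nu f_i\in W$; and equality in the final step gives $|\nu^{\mathcal H}|\equiv 1$, i.e. the outward normal $\nu$ is horizontal along $\p M$, equivalently the vertical distribution $\mathcal V=\mathcal H^\perp$ is everywhere tangent to $\p M$.

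Now the structural step. Since each $\nabla f_i$ is parallel, both $\mathcal H$ and $\mathcal V$ are parallel, hence integrable with totally geodesic leaves, and $\mathcal H$ is flat. Thus $F=(f_1,\dots,f_m):M\to\R^m$ is a Riemannian submersion with flat integrable horizontal distribution and totally geodesic fibers; because $\nu$ is horizontal the fibers are tangent to $\p M$, hence are closed manifolds and $F$ is proper. At this point I would invoke the decomposition theorem for flat, totally geodesic Riemannian submersions to conclude that $M$ is isometric to a metric product $B\times F$, with $B=F(M)\subseteq\R^m$ a flat domain, $F$ the closed fiber, and the $f_i$ the Euclidean coordinates on $B$. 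To identify $B$ I use the boundary data: along $\p B$ the outward unit normal is $(\sigma_{(1)}x_1,\dots,\sigma_{(m)}x_m)$, so $\p B\subseteq\{\sum_i\sigma_{(i)}x_i^2=\mathrm{const}\}$ while $\sum_i\sigma_{(i)}^2x_i^2\equiv 1$ there; a Lagrange-multiplier comparison of these two quadrics forces all $\sigma_{(i)}$ equal, say to $1/R$, whence $\p B$ is the sphere of radius $R$ and $B=\mathbb{B}^m(R)$. Then $\sigma_1=\cdots=\sigma_m=1/R$, and equality in \eqref{eq-trace} gives $m/R=\Vol(\p M)/\Vol(M)$, i.e. $R=m\Vol(M)/\Vol(\p M)$. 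The constraint \eqref{eq-sigma-mu} is exactly the requirement from Ky Fan's step that $1/R$ be among the $m$ smallest positive eigenvalues: separating variables on $\mathbb{B}^m(R)\times F$ shows the remaining Steklov eigenvalues are governed by $\sigma(\mu)$ for the Laplace eigenvalues $\mu$ of $F$, and since $\sigma(\mu)$ is increasing in $\mu$ this is equivalent to $\sigma(\mu_1(F))\ge 1/R$. The converse is this computation run forwards: on the product the coordinate functions give $\sigma=1/R$ with multiplicity $m$, and $\sigma(\mu_1(F))\ge 1/R$ guarantees these are the first $m$ positive eigenvalues, so equality holds.

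The hard part will be the decomposition theorem: upgrading the infinitesimal splitting (parallel $\mathcal H$ and $\mathcal V$) to a genuine global isometric product in the presence of boundary and with only the fibers, not the base, assumed complete. One must show the holonomy of the submersion is trivial, so that $M\to B$ is a metric product rather than merely a flat bundle, and the condition $|\nu^{\mathcal H}|\equiv 1$ must be used to guarantee the fibers close up without meeting $\p M$ transversally; this is the ingredient I expect to isolate as the key lemma.
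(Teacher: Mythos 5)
Your proposal is correct and follows essentially the same route as the paper: bound $\sigma_1+\cdots+\sigma_m$ by the trace of the Dirichlet-to-Neumann form on the span of the parallel-gradient functions (the paper packages your Ky Fan step as a Cauchy--Schwarz comparison of two Rayleigh quotients plus Courant min--max), extract from equality that the $f_i$ are Steklov eigenfunctions with $\nu$ horizontal and all $\sigma_i=1/R$, apply the decomposition theorem for flat, totally geodesic submersions (Theorem \ref{thm-decompose}), and finish by separation of variables. The one adjustment is to run your quadric comparison \emph{before} invoking the decomposition theorem, since that theorem requires the base to be simply connected (the paper's annulus example shows this hypothesis is essential) and this is only known once the image of $(f_1,\dots,f_m)$ has been identified as the ball $\mathbb{B}^m(R)$; your quadric argument uses only boundary data, so the reordering is harmless.
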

The restriction \eqref{eq-sigma-mu} is necessary. For example, consider $M=[-1,1]\times \mathbb S^1(L)$ where $\mathbb S^1(L)$ is the round circle with radius $L$. As computed in \cite{FS1}, 
\begin{equation}
\sigma_1(M)=\min\left\{\sigma\left(\frac{1}{L^2}\right),1\right\}=\min\left\{\frac{1}{L}\tanh\left(\frac1L\right),1\right\}.
\end{equation} 
So, 
\begin{equation}
\sigma_1(M)=\frac{\Vol (\p M)}{\Vol(M)}=1
\end{equation}
only when $\frac{1}{L}\tanh\left(\frac1L\right)\geq 1$.

Moreover, as a direct corollary of Theorem \ref{thm-trace-rigidity}, one has
\begin{cor}Let $\Omega\subset \R^n$ be a bounded domain with smooth boundary. Then,
\begin{equation}
\sigma_1+\sigma_2+\cdots+\sigma_n\leq \frac{\Vol(\p \Omega)}{\Vol(\Omega)}.
\end{equation}
The equality holds if and only if $\Omega$ is a round ball.
\end{cor}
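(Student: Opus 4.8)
The plan is to deduce the Corollary directly from Theorem \ref{thm-trace-rigidity} by first computing the dimension $m$ of the space $V$ of parallel exact $1$-forms and then transporting the product conclusion back into $\R^n$. First I would observe that on a connected flat domain $\Omega\subset\R^n$ a $1$-form $\omega$ is parallel (with respect to the Levi-Civita connection of the flat metric) precisely when it has constant coefficients in the standard coordinates, i.e.\ $\omega=\sum_{i=1}^n a_i\,dx_i$; every such form equals $d\big(\sum_i a_i x_i\big)$ and is therefore exact. Hence $V=\Span\{dx_1,\dots,dx_n\}$ and $m=\dim V=n$. With $m=n$, Theorem \ref{thm-trace-rigidity} applies verbatim and yields the stated inequality $\sigma_1+\cdots+\sigma_n\le \Vol(\p\Omega)/\Vol(\Omega)$, together with the assertion that equality forces $\Omega$ to be isometric to a metric product $\mathbb{B}^n(R)\times F$ with $R=n\Vol(\Omega)/\Vol(\p\Omega)$, $F$ closed, and $\sigma(\mu_1(F))\ge 1/R$.

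Next I would run a dimension count. Since $\dim\Omega=n=\dim\mathbb{B}^n(R)$, the factor $F$ must satisfy $\dim F=0$; as $\Omega$ is connected, $F$ is a single point and the product degenerates to $\mathbb{B}^n(R)$ itself. In this degenerate case there are no positive eigenvalues of the Laplacian on $F$, so the constraint \eqref{eq-sigma-mu} is vacuously satisfied (reading $\mu_1(F)=+\infty$, whence $\sigma(\mu_1(F))=+\infty\ge 1/R$) and imposes no obstruction.

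The remaining point---the one I expect to be the only genuine step---is to upgrade the abstract statement that $\Omega$ is \emph{isometric} to $\mathbb{B}^n(R)$ to the concrete statement that $\Omega$ is a genuine round ball sitting in $\R^n$. Here I would invoke Euclidean rigidity: a Riemannian isometry between two connected open subsets of flat $\R^n$ is locally the restriction of a rigid motion, and since $\mathbb{B}^n(R)$ is connected (indeed simply connected) this local rigid motion extends to a single global affine isometry $x\mapsto Ax+b$ with $A\in O(n)$. Consequently $\Omega=A\,\mathbb{B}^n(R)+b$ is the Euclidean ball of radius $R=n\Vol(\Omega)/\Vol(\p\Omega)$ centred at $b$, that is, a round ball. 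The converse is immediate from the theorem: when $\Omega$ is a round ball the factor $F$ is a point, \eqref{eq-sigma-mu} holds vacuously, and equality is attained (equivalently, it can be verified directly from the explicit Steklov spectrum of the ball). This completes the characterization of the equality case.
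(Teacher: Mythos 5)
Your proposal is correct and follows the same route the paper intends: the paper states this as a direct corollary of Theorem \ref{thm-trace-rigidity} without writing out the details, and your argument (computing $m=n$ for a flat domain, forcing $F$ to be a point by dimension count, noting that \eqref{eq-sigma-mu} is vacuous, and upgrading the abstract isometry to a Euclidean rigid motion) is exactly the intended deduction.
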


The key ingredient in the proof of the rigidity in Theorem \ref{thm-trace-rigidity} is the following result about triviality of a flat and totally geodesic Riemannian submersion between two complete Riemannian manifolds with boundary. Here a Riemanian submersion $\pi:M\to N$ is said to be flat if the horizontal distribution is integrable, and is said to be totally geodesic if each fibre is totally geodesic (see \cite{Wa}).
\begin{thm}\label{thm-decompose}
Let $\pi:(\ol M^{n+r},\ol g)\to (M^n,g)$ be a flat and totally geodesic Riemannian submersion between two complete Riemannian manifolds with boundary such that $\pi(\p\ol M)= \p M$. Suppose that $M$ is simply connected. Then, there is a complete Riemanian manifold $F$ without boundary and an isometry $\varphi:\ol M\to F\times M$ such that $\pi_M\circ\varphi=\pi$.
\end{thm}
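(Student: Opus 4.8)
The plan is to show that the twin vanishing of O'Neill's tensors forces the horizontal and vertical distributions to be \emph{parallel}, and then to globalize the resulting infinitesimal splitting into a metric product using completeness and the simple connectivity of the base.

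First I would recall O'Neill's integrability tensor $A$ and second-fundamental-form tensor $T$. The hypotheses mean exactly that $A\equiv 0$ (the horizontal distribution is integrable) and $T\equiv 0$ (the fibres are totally geodesic). Feeding these into O'Neill's covariant-derivative formulas, one checks that $\nabla_Z V$ is vertical for every vector field $Z$ and vertical $V$, and that $\nabla_Z X$ is horizontal for every $Z$ and horizontal $X$. Hence the vertical distribution $\mathcal V=\ker d\pi$ and the horizontal distribution $\mathcal H=\mathcal V^\perp$ are both parallel, orthogonal, complementary and integrable, with totally geodesic leaves. Moreover $d\pi$ is a linear isometry on $\mathcal H$, so $\pi$ restricts to a local isometry on each horizontal leaf.

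Next I would fix a horizontal leaf $L$ and argue that $\pi|_L:L\to M$ is an isometry. A geodesic of $\ol M$ issuing tangent to a totally geodesic leaf stays in that leaf; since $\ol M$ is complete, such geodesics extend for all time (meeting $\p\ol M$ only where the projection meets $\p M$, by $\pi(\p\ol M)=\p M$), so $L$ is complete and $\pi|_L$ is a local isometry between complete manifolds with boundary, hence a Riemannian covering. Equivalently, completeness of $\ol M$ together with the totally geodesic fibres makes $\mathcal H$ a complete Ehresmann connection, so curves in $M$ lift horizontally for all parameter values, and flatness keeps each lift inside a single leaf. Because $M$ is simply connected, this covering is a global isometry $\pi|_L:L\xrightarrow{\sim}M$; in particular every horizontal leaf meets the fibre $F:=\pi^{-1}(o)$ over a fixed interior base point $o$ in exactly one point. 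I would then define $\varphi:\ol M\to F\times M$ by $\varphi(\ol p)=\big(L_{\ol p}\cap F,\ \pi(\ol p)\big)$, where $L_{\ol p}$ is the horizontal leaf through $\ol p$; its inverse sends $(f,p)$ to the unique intersection of the horizontal leaf through $f$ with the fibre over $p$, and smoothness of both maps comes from smoothness of the two foliations. By construction $\pi_M\circ\varphi=\pi$. To see $\varphi$ is an isometry I would invoke the orthogonal parallel splitting $T\ol M=\mathcal H\oplus\mathcal V$: on each horizontal leaf $\varphi$ agrees with the isometry $\pi|_L$ onto $\{f\}\times M$, while on each fibre $\varphi$ is the fibre-transport map $\pi^{-1}(p)\to F$ obtained by sliding along horizontal leaves. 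Since $\mathcal V$ is parallel, horizontal transport acts by linear isometries on vertical spaces, and as the fibres are totally geodesic these assemble into Riemannian isometries between fibres; thus $\varphi$ restricts to isometries on both factors and preserves their orthogonality, so it is a global isometry. Finally, $\pi(\p\ol M)=\p M$ forces $\p\ol M=\pi^{-1}(\p M)$, whence $F$ is a complete manifold without boundary.

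The genuinely delicate point is the globalization in the third step: the vanishing of $A$ and $T$ only yields the splitting infinitesimally, and upgrading it to a global metric product hinges on two ingredients—metric completeness of $\ol M$, which guarantees a complete horizontal lift so that horizontal leaves are complete, and simple connectivity of $M$, which kills the holonomy of the flat horizontal distribution and thereby makes $\pi|_L$ a genuine isometry rather than merely a covering. Carrying this out in the category of manifolds with boundary—ensuring that horizontal lifts respect boundaries through $\pi(\p\ol M)=\p M$ and that the chosen fibre $F$ is boundaryless—is where the argument must be handled with the most care.
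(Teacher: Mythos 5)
Your proposal is correct in outline but follows a genuinely different route from the paper. You globalize the infinitesimal splitting through the classical leaf-theoretic machinery: Frobenius gives totally geodesic horizontal leaves, completeness makes $\mathcal H$ a complete Ehresmann connection so that $\pi|_L$ is a Riemannian covering, and simple connectivity of $M$ trivializes it, after which $\varphi(\ol p)=(L_{\ol p}\cap F,\pi(\ol p))$ does the job. The paper never works with leaves at all: it fixes the fibre $F=\pi^{-1}(p)$, defines the holonomy map $\Phi_\gamma:F\to\pi^{-1}(q)$ by horizontal lifting, and proves path-independence of $\Phi_\gamma$ directly by lifting a homotopy $\Psi$ between two curves, encoding both $\Psi$ and its lift as \emph{developments} of curves in a fixed tangent space, and comparing the two resulting generalized Jacobi-type Cauchy problems \eqref{eq-g-Jacobi} and \eqref{eq-g-Jacobi-2}; the curvature identities of the local splitting force the variation field upstairs to equal the one downstairs, hence to vanish at the endpoint. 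The two arguments are proving the same well-definedness statement (your ``$L_{\ol p}\cap F$ is a single point'' is exactly the paper's ``$\Phi_{\gamma_0}=\Phi_{\gamma_1}$''), but they buy different things. Your version is more conceptual and standard; however, the step you yourself flag as delicate --- completeness of horizontal lifts, the leaf through a boundary point being a manifold with boundary, and the local-isometry-implies-covering argument --- is stated for manifolds \emph{without} boundary in the references (Hermann, O'Neill), and making it rigorous here requires checking that $\pi^{-1}(\p M)=\p\ol M$, that lifts of curves touching $\p M$ continue past the boundary, and that covering-space theory applies in this category; you assert rather than prove these points. The paper's development/ODE argument is more computational but is designed precisely to sidestep those boundary issues, since developments are solutions of ODEs in a fixed tangent space whose existence theory on manifolds with boundary is established in the cited reference. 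So your proof is viable, but the burden of rigor sits exactly where you left it as a remark, and that is the part the paper's different method is built to handle.
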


Note that the result fails when $M$ is not simply connected. For example, let $M$ be the annulus $\{x\in \R^2\ |\  1\leq\|x\|\leq 2\}$ with standard metric and $\ol M$ be its universal cover, then the conclusion fails. Moreover, it is not hard to see that the horizontal and vertical distributions of a flat and totally geodesic submersion are both parallel (see \cite{Wa}). The assumption $\pi(\p\ol M)=\p M$ implies that normal vectors on $\p \ol M$ must be horizontal. So, by the deRham decompositions for Riemannian manifolds with boundary by the second named author \cite{Yu}, we have the splitting conclusion when $\ol M$ is simply connected. Theorem \ref{thm-decompose} just gives us a splitting conclusion by replacing the simply connectedness of $\ol M$ by the simply connectedness of $M$ which is more suitable for our application in the proof of Theorem \ref{thm-trace-rigidity}.

\section{Proofs of main results}
Let's first recall the notion of development which will be used the in the proof of Theorem \ref{thm-decompose}. Let $v:[0,T]\to T_pM$ be a curve in $T_pM$. A curve $\gamma:[0,T]\to M$ with
\begin{equation}
\gamma'(t)=P_0^t(\gamma)(v(t))\ \mbox{and}\ \gamma(0)=p
\end{equation}
is called the development of $v$. This notion was presented in the language of principal fibre bundle in \cite{KN}. A proof of the local existence and uniqueness of developments can be found in \cite{Yu}.
\begin{proof}[Proof of Theorem \ref{thm-decompose}]
Let $p\in \Int M$ be a fixed point and $F=\pi^{-1}(p)$ where $\Int M=M\setminus\p M$. Because, $\pi(\p \ol M)=\p M$, $F$ is a complete Riemanian manifold without boundary. For each $q\in M$, let $\gamma:[0,1]\to M$ be a curve such that $\gamma(0)=p$ and $\gamma(1)=q$. For each $\ol p\in F$, let $\ol\gamma_{\ol p}$ be the horizontal lift of $\gamma$ with $\ol\gamma_{\ol p}(0)=\ol p$. Consider the map $\Phi_\gamma:F\to \pi^{-1}(q)$ sending $\ol p\to \ol\gamma_{\ol p}(1)$. It is not hard to see that $\Phi_\gamma$ is an isometry (see \cite{Ne}). To show the conclusion of the theorem, we only need to prove that $\Phi_\gamma$ is independent of the choice of $\gamma$ and depending only on the end point $q$ (see \cite[Thoerem 5]{Ne}).

Let $\gamma_0,\gamma_1:[0,1]\to M$ be two smooth  curves with $\gamma_0(0)=\gamma_1(0)=p$ and $\gamma_0(1)=\gamma_1(1)=q$. Because $M$ is simply connected, there is a homotopy $\Psi:[0,1]\times[0,1]\to M$ such that
\begin{equation}
\left\{\begin{array}{l}\Psi(0,t)=\gamma_0(t)\\
\Psi(1,t)=\gamma_1(t)\\
\Psi(u,0)=p\\
\Psi(u,1)=q.\\
\end{array}\right.
\end{equation}
For each $\ol p\in F$, let $\ol\Psi_{\ol p}$ be the horizontal lift of $\Psi$ with $\ol\Psi_{\ol p}(u,0)=\ol p$. Let $v(u,t)\in T_pM$ be given by
\begin{equation}
v(u,t)=P_t^0(\gamma_u)(\gamma_u'(t))
\end{equation}
where $\gamma_u(t)=\Psi(u,t)$. Then, $\gamma_u$ is the development of $v(u,\cdot)$. Let $\ol v(u,t)\in T_{\ol p}\ol M$ be the horizontal lift of $v(u,t)$. Because $\pi:\ol M\to M$ is locally splitting (see \cite{Wa}), $\ol\Psi_{\ol p}(u,\cdot)$ is the development of $\ol v(u,\cdot)$.

Let $e_1,e_2,\cdots,e_n$ be an orthonormal frame of $T_pM$ and $\ol e_i\in T_{\ol p}\ol M$ be the horizontal lift of $e_i$ for $i=1,2,\cdots, n$. Let $E_i(u,t)=P_{0}^t(\gamma_u)(e_i)$ and $\ol E_i(u,t)=P_0^t(\ol\gamma_u)(\ol e_i)$ where $\ol\gamma_u(t)=\ol\Psi_{\ol p}(u,t)$. Because $\pi$ is locally splitting again, $\ol E_i(u,t)$ is the horizontal lift of $E_i(u,t)$. Let $\ol e_{n+1},\cdots ,\ol e_{n+r}\in T_{\ol p}F$ be an orthonormal frame and $\ol E_\alpha(u,t)=P_{0}^t(\ol\gamma_u)(\ol e_\alpha)$ for $\alpha=n+1,\cdots,n+r$. By parallel-ness of the vertical distribution, $\ol E_\alpha(u,t)$ is vertical.

Suppose that
\begin{equation}
\frac{\p \Psi}{\p u}=\sum_{i=1}^nU_iE_i
\end{equation}
and
\begin{equation}
v=\sum_{i=1}^nv_ie_i.
\end{equation}
Then, by \cite{Yu}, $U_i$'s satisfy the following Cauchy problem:
\begin{equation}\label{eq-g-Jacobi}
\begin{split}
\left\{\begin{array}{ll}U''_i=\displaystyle\sum_{j,k,l=1}^nv_kv_lR(E_k,E_i,E_l,E_j)U_j+\p_u\p_tv_i+\sum_{j=1}^n\p_tv_jX_{ji}&i=1,2,\cdots,n\\
X'_{ij}=\displaystyle\sum_{k,l=1}^nv_lR(E_i,E_j,E_l,E_k)U_k&i,j=1,2,\cdots,n\\
X_{ij}(u,0)=0&i,j=1,2,\cdots,n\\
U_i(u,0)=0&i=1,2,\cdots,n\\
U'_i(u,0)=\p_uv_i(u,0)&i=1,2,\cdots,n.
\end{array}\right.
\end{split}
\end{equation}
Here the symbol $'$ means taking derivative with respect to $t$ and $R$ is the curvature tensor of $M$.

Suppose that
\begin{equation}
\frac{\p \ol\Psi_{\ol p}}{\p u}=\sum_{a=1}^{n+r}\ol U_a\ol E_a,
\end{equation}
and note that
\begin{equation}
\ol v=\sum_{i=1}^nv_i\ol e_i.
\end{equation}
So, by \cite{Yu} again, $\ol U_a$'s satisfy the following Cauchy problem:
\begin{equation}\label{eq-g-Jacobi-2}
\begin{split}
\left\{\begin{array}{ll}\ol U''_a=\displaystyle\sum_{b,c,d=1}^{n+r}v_cv_d\ol R(\ol E_c,\ol E_a,\ol E_d,\ol E_b)\ol U_b+\p_u\p_tv_a+\sum_{b=1}^{n+r}\p_tv_b\ol X_{ba}&a=1,2,\cdots,n+r\\
\ol X'_{ab}=\displaystyle\sum_{c,d=1}^{n+r}v_d\ol R(\ol E_a,\ol E_b,\ol E_d,\ol E_c)U_c&a,b=1,2,\cdots,n+r\\
\ol X_{ab}(u,0)=0&a,b=1,2,\cdots,n+r\\
\ol U_a(u,0)=0&a=1,2,\cdots,n+r\\
\ol U'_a(u,0)=\p_uv_a(u,0)&a=1,2,\cdots,n+r\\
\end{array}\right.
\end{split}
\end{equation}
Here $\ol R$ is curvature tensor of $\ol M$ and we take $v_\alpha=0$ for $\alpha=n+1,n+2,\cdots,n+r$.

Because $\pi$ is local splitting,
\begin{equation}
\ol R(\ol E_i,\ol E_j,\ol E_k,\ol E_l)=R(E_i,E_j,E_k,E_l),
\end{equation}
\begin{equation}
\ol R(\ol E_\alpha,\ol E_j,\ol E_k,\ol E_l)=0
\end{equation}
and
\begin{equation}
\ol R(\ol E_\alpha,\ol E_\beta,\ol E_k,\ol E_l)=0
\end{equation}
for $i,j,k,l=1,2,\cdots,n$ and $\alpha,\beta=n+1,\cdots,n+r$. So, it is not hard to check that
\begin{equation}
\begin{split}
\left\{\begin{array}{l}\ol U_i=U_i\\
\ol U_\alpha=0\\
\ol X_{ij}=X_{ij}\\
\ol X_{i\alpha}=\ol X_{\alpha i}=\ol X_{\alpha\beta}=0
\end{array}\right.
\end{split}
\end{equation}
is the solution of the Cauchy problem \eqref{eq-g-Jacobi-2}. Therefore,
\begin{equation}
\frac{\p \ol\Psi_{\ol p}}{\p u}(u,1)=0
\end{equation}
and $\Phi_{\gamma_0}=\Phi_{\gamma_1}$. This completes the proof of the theorem.
\end{proof}

We are now ready to prove Theorem \ref{thm-trace-rigidity}. For completeness, we will also present a proof of the estimate \eqref{eq-trace} which uses the idea of Raulot and Savo in \cite{RS1,RS2} and is not the same as our previous proof in \cite{SY1}.

\begin{proof}[Proof of Theorem \ref{thm-trace-rigidity}]
Note that for any harmonic function $f$ on $M$, one has
\begin{equation}
\begin{split}
\left(\int_{M}\|\nabla f\|^2dV_M\right)^2=&\left(\int_{ M}\Div(f\nabla f)dV_M\right)^2\\
=&\left(\int_{\p M}f\frac{\p f}{\p\nu}dV_{\p M}\right)^2\\
\leq&\int_{\p M}f^2dV_{\p M}\int_{\p M}\left(\frac{\p f}{\p \nu}\right)^2dV_{\p M}.
\end{split}
\end{equation}
Equality holds if and only if $f$ is a Steklov eigenfunction. So, for any non-trivial  harmonic function $f$ on $M$,
\begin{equation}\label{eq-key}
\frac{\int_{M}\|\nabla f\|^2dV_{M}}{\int_{\p M}f^2dV_{\p M}}\leq \frac{\int_{\p M}\left(\frac{\p f}{\p\nu}\right)^2dV_{\p M}}{\int_M\|\nabla f\|^2dV_M}.
\end{equation}
Let
$$U=\left\{f\in C^\infty(M)\ \bigg|\ \int_{\p M}fdV_{\p M}=0,\ df\mbox{ is paralell.}\right\}.$$
By assumption, we know that $\dim U=m$. By the inequality \eqref{eq-key} and Courant's min-max principle, we have
\begin{equation}
\sigma_k\leq \lambda_k
\end{equation}
for any $k=1,2,\cdots,m$, where $\lambda_1\leq \lambda_2\leq\cdots\leq\lambda_m$ are the eigenvalues of the Rayleigh quotient $\frac{\int_{\p M}\left(\frac{\p f}{\p\nu}\right)^2dV_{\p M}}{\int_M\|\nabla f\|^2dV_M}$ restricted on $U$.

Let $f_1,f_2,\cdots, f_m$ be a basis of $U$ such that
\begin{equation}
\vv<\nabla f_i,\nabla f_j>\equiv\delta_{ij}
\end{equation}
and  $f_i$ is the eigenfunction of the Rayleigh quotient $\frac{\int_{\p M}\left(\frac{\p f}{\p\nu}\right)^2dV_{\p M}}{\int_M\|\nabla f\|^2dV_M}$ restricted on $U$ with respect to the eigenvalue $\lambda_i$. Then
\begin{equation}
\begin{split}
\sigma_1+\sigma_2+\cdots+\sigma_m\leq& \lambda_1+\lambda_2+\cdots+\lambda_m\\
=&\frac{\int_{\p M}\sum_{i=1}^m\left(\frac{\p f_i}{\p\nu}\right)^2dV_{\p M}}{\Vol(M)}\\
=&\frac{\int_{\p M}\sum_{i=1}^m\vv<\nu,\nabla f_i>^2dV_{\p M}}{\Vol(M)}\\
\leq&\frac{\int_{\p M}\|v\|^2dV_{\p M}}{\Vol(M)}\\
=&\frac{\Vol(\p M)}{\Vol(M)}.
\end{split}
\end{equation}
When the equality holds, one has $\nu\in \Span\{\nabla f_1,\nabla f_2,\cdots,\nabla f_m\}$ and $f_1,f_2,\cdots,f_m$ are the Steklov eigenfunctions with respect to $\sigma_1,\sigma_2,\cdots,\sigma_m$ respectively. So,
\begin{equation}\label{eq-nu-1}
\begin{split}
\nu=\sum_{i=1}^m\vv<\nu,\nabla f_i>\nabla f_i=\sum_{i=1}^m\sigma_i f_i \nabla f_i.
\end{split}
\end{equation}
Therefore,
\begin{equation}\label{eq-sum-square}
\sum_{i=1}^m\sigma_i^2f_i^2=\|\nu\|^2=1
\end{equation}
on $\p M$. This implies that $\nu$ must be parallel to
\begin{equation}\label{eq-nu-2}
\nabla\left(\sum_{i=1}^m\sigma_i^2f_i^2\right)=2\sum_{i=1}^m\sigma_i^2f_i\nabla f_i.
\end{equation}
By comparing \eqref{eq-nu-1} and \eqref{eq-nu-2}, we have
\begin{equation}
\sigma_1=\sigma_2=\cdots=\sigma_m=\frac{\Vol(\p M)}{m\Vol(M)}.
\end{equation}
Then, by \eqref{eq-sum-square},
\begin{equation}
\sum_{i=1}^mf_i^2=\left(\frac{m\Vol(M)}{\Vol(\p M)}\right)^2=R^2
\end{equation}
on $\p M$. Let $\pi=(f_1,f_2,\cdots,f_m)$. It is not hard to check that $\pi$ is a Riemannian submersion from $M$ to $\mathbb{B}^m(R)$ with $\pi(\p M)=\p\mathbb{B}^m(R)$. By Theorem \ref{thm-decompose}, we know that $M=\mathbb B^m(R)\times F$ for some closed Riemannian manifold $F$. 

Moreover, when $M=\mathbb B^m(R)\times F$ with $F$ a closed Riemannian manifold. Let $$0=\mu_0<\mu_1\leq \mu_2\leq\cdots\leq \mu_k\leq\cdots$$ be the spectrum of the Laplacian operator for $F$. Then, by a standard argument as in \cite{FS1} using the method of separating variables, we know that the Steklov spectrum of $M$ is formed by the eigenvalues $\sigma$ of the following boundary value problems on $\mathbb B^m(R)$:
\begin{equation}\label{eq-g-boundary}
\left\{\begin{array}{ll}\Delta f=\mu_i f&{\rm on}\ \mathbb B^m(R),\\
\frac{\p f}{\p \nu}=\sigma f&{\rm on}\ \p \mathbb B^m(R),
\end{array}\right.
\end{equation} 
for $i=0,1,2,\cdots.$ The corresponding Rayleigh quotient of the eigenvalue problem \eqref{eq-g-boundary} is 
\begin{equation}
Q_i(f)=\frac{\int_M(\|\nabla f\|^2+\mu_i f^2)dV_M}{\int_{\p M}f^2dV_{\p M}}
\end{equation}
for $i=0,1,2,\cdots$. So, it is not hard to see that 
\begin{equation}
\sigma_1(M)=\cdots=\sigma_m(M)=\sigma_1(\mathbb B^m(R))=\cdots=\sigma_m(\mathbb B^m(R))=\frac1R
\end{equation}
which is the first $m$ positive Steklov eigenvalues of $M$ only when
\begin{equation}
\sigma(\mu_1)\geq \frac{1}{R}
\end{equation}
because 
\begin{equation}
\sigma(\mu_1)\leq\sigma(\mu_2)\leq\cdots\leq\sigma(\mu_k)\leq\cdots.
\end{equation}
This completes the proof of the theorem.
\end{proof}

\end{document}